\journal{ }
\def\BState{\State\hskip-\ALG@thistlm}
\newtheorem{theorem}{Theorem}
\newtheorem{ThmObs}{Observation}
\newtheorem{ThmDef}{Definition}
\begin{document}
\begin{frontmatter}
\title{Disaggregated Benders Decomposition for solving a Network Maintenance Scheduling Problem.}
\author{Robin H. Pearce \& Michael Forbes}
\address{School of Mathematics and Physics, University of Queensland, Australia}

\begin{abstract}
We consider a problem concerning a network and a set of maintenance requests to be undertaken. The aim is to schedule the maintenance in such a way as to minimise the impact on the total throughput of the network. We apply disaggregated Benders decomposition using lazy constraints to solve the problem to optimality, as well as explore the strengths and weaknesses of the technique. We prove that our Benders cuts are Pareto-optimal. Solutions to the LP relaxation also provide further valid inequalities to reduce total solve time. We implement these techniques on simulated data presented in previous papers, and compare our solution technique to previous methods and a direct MIP formulation. We prove optimality in many problem instances that have not previously been proven.
\end{abstract}

\begin{keyword}
	Network flows\sep Scheduling\sep Maintenance planning\sep Benders Decomposition\sep Lazy constraints
\end{keyword}
\end{frontmatter}

\section{Introduction}
Network design and scheduling problems are an important area of study, particularly as they have widespread practical applications. Examples of these problems include minimising the cost of maintaining a network \cite{Boland2013}, restoring a damaged network \cite{Nurre2012} or extending an existing network \cite{Tang2013}. In practice, networks are often large, and optimising their design can be difficult and time-consuming. Industry is always interested in any improvements to operations that result in reduced costs.

Benders decomposition is a powerful technique for breaking a difficult mixed integer program (MIP) into smaller, easier to solve problems \cite{Benders1962}. It has been successfully applied to a number of problems, particularly network design and facility location problems. This technique is especially powerful when the sub-problems can be disaggregated to allow us to add stronger disaggregated Benders cuts. This can make their solutions easier to obtain. Magnanti and Wong (1984) \cite{Magnanti1984} showed that the use of Pareto-optimal cuts with Benders decomposition can improve convergence time by up to 50 times over other Benders cuts. In 2008 Camargo, Miranda and Luna \cite{Camargo2008} applied disaggregated Benders decomposition to the design of hub-and-spoke networks. Tang and Saharidis \cite{Tang2013} used disaggregated Benders decomposition for solving a capacitated facility location problem with existing facilities which could be removed or extended, and Lusby, Muller and Petersen \cite{Lusby2013} used disaggregated Benders decomposition for scheduling the maintenance of power plants in France.

Network design and scheduling problems are perfect candidates for Benders decomposition, as they can be separated into sub- and master problems. The sub-problems for these problems involve computing the maximum flow through the network, and the master problem handles the higher-level design of the network in order to have the greatest flow subject to design constraints. Disaggregated Benders decomposition can be applied if the flow at any time is independent of the flow at other times. This often makes the solution of large network design problems easier.

We consider a problem of performing maintenance on a network to minimise the impact on total flow through the network. Boland et. al. \cite{Boland2013} solved this problem to near-optimality using heuristics. In this paper we show that this problem can be solved quickly using disaggregated Benders decomposition and lazy constraints. The problem formulation we use is similar to that of Boland et. al, with different notation and a necessary change to one constraint. We then decompose the problem into a master problem and a set of sub-problems, and apply disaggregated Benders decomposition to solve it.

The networks considered by Boland et. al. \cite{Boland2013} are rail networks for moving coal from the Hunter Valley to shipping terminals in Newcastle. The arcs of the network represent railway lines and the nodes are junctions, where it is possible to choose which direction to take. In the real network the shipping terminal is comprised of different machines and railways which are also modelled as being part of the same network, and similarly have maintenance jobs assigned to them.

The rest of this section will be a short description of the max-flow min-cut theorem which is useful in solving this problem. In Section 2 we define the problem and present the formulation. Section 3 is where we describe the use of Benders decomposition and lazy constraints to separate and solve the problem. We also look at a scenario which makes this problem more difficult to solve, and prove the pareto optimality of our Benders cuts. In Section 4 we present our results and compare them to those found by Boland et. al. \cite{Boland2013}, as well as to the direct MIP implementation in our version of Gurobi. Section 5 contains concluding remarks.

\subsection{Max-flow min-cut theorem}
The max-flow min-cut theorem was discovered and proven by Elias, Feinstein and Shannon \cite{Elias1956}, and independently also by Ford and Fulkerson \cite{Ford1956}, in 1956. We will use the nomenclature from Elias, Feinstein and Shannon in talking about the max-flow min-cut theorem. 

A \textbf{cut-set} of a network is defined as a set of arcs which, when removed, prevents all flow from the source to the sink. This does not necessarily have to make the graph disconnected, as arcs are allowed to flow backwards, but there will be no complete path flowing forwards from the source to the sink. A \textbf{simple cut-set} is a cut-set which would no longer be a cut-set if any arc was omitted from it. The \textbf{value} of a cut-set is the sum of the capacities of all arcs in the set. A \textbf{minimal cut-set} is the cut-set with the smallest value of all possible cut-sets of the network.

With these definitions, we can state the max-flow min-cut theorem, which we have paraphrased from Elias et. al. \cite{Elias1956}:

\begin{theorem}
	The maximum possible flow from the source to the sink through a network is equal to the minimum value among all simple cut-sets.
\end{theorem}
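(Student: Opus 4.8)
The plan is to establish the two inequalities separately: first that the maximum flow is at most the value of every simple cut-set (weak duality), and then that there exists a single cut-set whose value equals the maximum flow (strong duality). Together these pin the maximum flow to the minimum cut-set value.

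First I would prove weak duality. Fix any feasible flow $f$ of value $|f|$ and any cut-set $C$. Decompose $f$ into source-to-sink path flows together with cycle flows, so that $|f|$ equals the sum of the path-flow values and the flow on each arc is the aggregate of the path and cycle flows routed through it. Since removing $C$ destroys every forward source-sink path, each path in the decomposition must cross $C$ at least once; hence $\sum_{a \in C} f(a) \ge |f|$, while $\sum_{a \in C} f(a) \le \sum_{a \in C} \mathrm{cap}(a) = \mathrm{value}(C)$. Therefore $|f| \le \mathrm{value}(C)$. As every simple cut-set is in particular a cut-set, taking the maximum over flows and the minimum over simple cut-sets gives $\text{(max flow)} \le \min_C \mathrm{value}(C)$.

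For the reverse inequality I would use the augmenting-path construction of Ford and Fulkerson. Starting from the zero flow, repeatedly find a path from source to sink in the residual network and push as much flow as possible along it; when no augmenting path remains, let $S$ be the set of nodes reachable from the source in the residual network. The source lies in $S$ and the sink does not, so the forward arcs directed from $S$ to its complement form a cut-set $C^{*}$. Every such forward arc must be saturated and every arc from the complement back into $S$ must be empty, for otherwise a residual arc would enlarge $S$; summing the conservation equations over the nodes of $S$ then identifies the flow value with $\mathrm{value}(C^{*})$ exactly. Deleting superfluous arcs from $C^{*}$ yields a simple cut-set of value no larger than $\mathrm{value}(C^{*})$, so $\min_C \mathrm{value}(C) \le \text{(max flow)}$, and combined with weak duality this forces equality (and indeed shows the contained simple cut-set attains the maximum flow).

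The main obstacle is establishing that this procedure actually terminates at a maximum flow so that the reachability set $S$ is meaningful. For integer or rational capacities the augmenting-path method terminates after finitely many augmentations, and monotonicity of the flow value guarantees optimality at termination. For arbitrary real capacities I would instead invoke the existence of a maximiser by a compactness argument, the feasible flows forming a compact set on which the value functional is continuous, and then apply the residual-reachability construction to that maximiser, where the absence of any augmenting path is guaranteed by optimality. The remaining technical point is the flow-conservation bookkeeping that equates the net cross-cut flow with $|f|$, which follows directly from summing the per-node conservation constraints over $S$.
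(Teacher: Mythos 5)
Your proposal is correct, but there is nothing in the paper to compare it against: the paper deliberately does not prove this theorem, stating ``For proof of this theorem we refer the reader to the original paper by Elias et.\ al.'' What you have written is the standard Ford--Fulkerson argument, which is a genuinely different (and more modern) route than the combinatorial argument in Elias, Feinstein and Shannon's 1956 paper. Your weak-duality step is sound and, importantly, compatible with the paper's definition of a cut-set (a set of arcs meeting every \emph{forward} source--sink path, not necessarily a disconnecting set): path/cycle decomposition plus non-negativity of the cycle contributions gives $|f| \le \mathrm{value}(C)$ for every cut-set $C$. Your strong-duality step via residual reachability is also sound, and you handle the two points such sketches usually miss: you reduce the residual cut $C^{*}$ to a \emph{simple} cut-set (the theorem quantifies over simple cut-sets only), and you address existence of a maximum flow for irrational capacities by compactness rather than relying on termination of augmentation. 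One caveat worth flagging: if you tried to run the residual argument inside the paper's own formulation, where the network is augmented with a high-capacity return arc $0$ from sink to source, that return arc must be excluded from the residual network, since otherwise its backward residual arc makes the sink reachable from the source whenever the flow is positive; the theorem as stated concerns the plain source-to-sink network, so this does not invalidate your proof, but the paper applies the theorem in the circulation setting. Finally, note that an LP-duality proof (the route the paper implicitly leans on, since it identifies the min cut with the support of the dual variables $u_{at}$ of the capacity constraints) would integrate most naturally with the rest of the paper, where those duals are exactly what generate the Benders cuts.
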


For proof of this theorem we refer the reader to the original paper by Elias et. al. \cite{Elias1956}. We use this theorem to place bounds on the flow of the network based on the availability of arcs which are in a simple cut-set, especially the minimum cut-set. 


\section{Problem definition}
We start with a network $G = (V,A)$ where $V$ is the set of nodes and $A$ is the set of directed arcs. Without loss of generality, assume the network has only one source and one sink, and that there is a directed arc from the sink to the source. This can be achieved by adding two new nodes - one source and one sink - and then adding directed arcs from the new source to the original sources, and from the original sinks to the new sink. The arc that connects the sink to the source now measures the total flow through the network. It has sufficiently high capacity that it does not restrict flow through the network, and is denoted arc $0$.

\subsection*{Sets}
\begin{tabular}{rl}
	\hline $V$ & Set of network nodes\\
	$A$ & Set of network arcs. $A \subseteq V\times V$\\
	$T$ & Set of time periods \\
	$R$ & Set of maintenance requests\\
	$R_a$ & Set of maintenance requests to be performed on arc $a$\\ \hline
\end{tabular}

\subsection*{Data}
\begin{tabular}{rl}
	\hline Cap$_a$ & Capacity of arc $a$\\
	$\delta^-(v)$ & Set of arcs entering node $v$\\
	$\delta^+(v)$ & Set of arcs leaving node $v$\\ 
	Dur$_r$ & Duration of maintenance job $r$\\
	Re$_r$ & Earliest time job $r$ can be started\\
	De$_r$ & Deadline for job $r$\\ \hline
\end{tabular}

\subsection*{Variables}
\begin{tabular}{rl}
	\hline $x_{at}$ & Flow over arc $a$ at time $t$\\
	$y_{at}$ & 1 if arc $a$ is operational at time $t$ and 0 if it is undergoing maintenance\\
	Start$_{rt}$ & 1 if maintenance request $r$ starts at time $t$ and 0 otherwise\\ \hline
\end{tabular}	

We have a set of maintenance requests that must be performed. Each request has a release time Re$_r$, a deadline De$_r$ and a duration Dur$_r$. It is assumed that a work crew will be available for each job regardless of where in the time window the job occurs. If maintenance is being performed on an arc, then the flow along that arc must be 0. If the arc is open, then the flow must not exceed Cap$_a$. As in \cite{Boland2013}, we make the assumption that no two jobs in $R_a$ for any $a$ can overlap. This allows us to strengthen the main scheduling constraint which now says if any maintenance job on a particular arc has started less than Dur$_r$ time periods previously (i.e. it is still running), then the arc is closed, otherwise it is open.

The problem \textit{maximum total flow with flexible arc outages (MaxTFFAO)} is now as follows:

\begin{align}
&\text{Maximise} \sum_{t\in T} x_{0t} \label{MainObj} \tag{MIP-OBJ}\\
&\text{Subject to:} \nonumber \\ 
& \sum_{a\in\delta^-(v)} x_{at} - \sum_{a\in\delta^+(v)} x_{at} = 0 & \forall v \in V, \forall t \in T \label{MainC1}\tag{MIP1}\\
& x_{at} \leq \textup{Cap}_a y_{at} & \forall a\in A, \forall t\in T \label{MainC2}\tag{MIP2}\\
& \sum_{t=Re_r}^{\textup{De}_r-\textup{Dur}_r+1} \text{Start}_{rt} = 1 & \forall r \in R \label{MainC3}\tag{MIP3} \\
& y_{at} + \sum_{r\in R_a}\sum_{t'=\max\{\textup{Re}_{r}, t-\textup{Dur}_r+1\}}^{\min\{t,\textup{De}_r\}} \text{Start}_{rt'} = 1 & \forall a \in A, \forall t \in T \label{MainC4}\tag{MIP4}\\ 
& x_{at} \geq 0,\hspace{5mm} y_{at}\in \{0,1\}, \hspace{5mm}\text{Start}_{rt} \in \{0,1\} & \forall a \in A, \forall t \in T, \forall r \in R \tag{MIP5}
\end{align}

The objective (\ref{MainObj}) is the sum of flow across the arc connecting the sink to the source, which measures the total flow of the network, over the considered time periods. Constraints (\ref{MainC1}-\ref{MainC2}) ensure that flow into and out of a node are the same, and that flow along any arc does not exceed the capacity. Constraint (\ref{MainC3}) ensures that every maintenance job is performed exactly once, and (\ref{MainC4}) is the modified constraint to control the $y_{at}$ variable appropriately.

\section{Disaggregated Benders decomposition and lazy constraints}
In this problem, the continuous variables $x_{at}$, and integer variables $y_{at}$ and Start$_{rt}$, only occur together in one constraint: the capacity constraint (\ref{MainC2}). This allows us to apply Benders decomposition by separating out the continuous variables into a sub-problem, and approximating the solution to the sub-problem with a new variable $\theta$. The result of this is a smaller, more relaxed problem which can be explored faster. 

This decomposition has a natural interpretation. The master problem finds the optimal maintenance schedule given the estimates of the network throughput, and the sub-problems calculate the actual throughput given the network configuration. Because the flow through the network at each time $t \in T$ is independent of the flow at other times, we further break up the problem by disaggregating the sub-problems in $t$, so we solve one sub-problem for each time period.

Traditional Benders decomposition is useful because it results in a smaller, easier to solve master problem. Because the variables associated with the sub-problems are being approximated rather than computed exactly, the master problem is more relaxed than the direct implementation, which could result in a larger branch-and-bound tree to find the optimal solution. While this means more nodes of the branch-and-bound tree need to be explored, the small size of the sub-problems means the nodes can be explored more quickly, and thus the whole problem can be solved more quickly.

Given a feasible solution for $y_{at}$, denoted by $y^*_{at}$, we solve the sub-problems for each time period. Each sub-problem is of the form:

\begin{align}
&\text{Maximise} \hspace{3mm} x_{0t'} \label{SPObj}\tag{SP-OBJ} \\
&\text{Subject to:} \nonumber \\ 
& \sum_{a\in\delta^-(v)} x_{at'} - \sum_{a\in\delta^+(v)} x_{at'} = 0 & \forall v \in V \label{SPC1}\tag{SP1}\\
& x_{at'} \leq Cap_a y^*_{at'} & \forall a\in A \label{SPC2}\tag{SP2}\\
& x_{at'} \geq 0 & \forall a \in A\tag{SP3}
\end{align}

 for every $t' \in T$. These are small linear programs which are easily solved by any good optimisation software package. Having solved the sub-problems, the dual variables associated with constraint (\ref{SPC2}) provide an estimate of the impact on the total flow of switching each arc on or off, which will allow us to add Benders cuts to the master problem. We denote these dual variables as $u_{at'}$, and hence the dual problem will consist of minimising 
 \begin{equation}
 \sum_{a\in\text{A}} \limits Cap_a y^*_{at'}u_{at'}
 \end{equation}
 
 We approximate the solutions to the sub-problems with new variables in the master problem, $\theta_{t'}$. For each $t'\in T$, a valid upper bound on the objective of the dual problem, and hence the objective of the primal problem, is given by:

\begin{align}
\theta_{t'} & \leq \sum_{a\in A}Cap_a y^{*^k}_{at'}u^{*^k}_{at'} + \left(y_{at'}-y^{*^k}_{at'}\right)Cap_a u^{*^k}_{at'} \nonumber \\
\text{Or}& \nonumber \\
\theta_{t'} & \leq \sum_{a\in A}Cap_a y_{at'}u^{*^k}_{at'}\hspace{10mm} \forall k\in \lbrace1 ... K\rbrace, \forall t'\in T \label{BendCut}
\end{align}

where $K$ is the number of times we have added Benders cuts. We now present the master problem for the disaggregated Benders decomposition formulation for the MaxTFFAO problem:

\begin{align}
&\text{Maximise} \sum_{t\in T} \theta_t \label{BMPObj} \tag{MP-OBJ}\\
&\text{Subject to:} \nonumber \\ 
& \theta_t \leq \sum_{a\in A}\textup{Cap}_a y_{at}u^{*^k}_{at} & \forall k\in \lbrace1 ... K\rbrace, \forall t\in T \label{BMPC1}\tag{MP1}\\
& \sum_{t=\textup{Re}_r}^{\textup{De}_r-\textup{Dur}_r+1} \text{Start}_{rt} = 1 & \forall r \in R \label{BMPC2} \tag{MP2}\\
& y_{at} + \sum_{r\in R_a}\sum_{t'=\max\{\textup{Re}_{r}, t-\textup{Dur}_r+1\}}^{\min\{t,\textup{De}_r\}} \text{Start}_{rt'} = 1 & \forall a \in A, \forall t \in T \label{BMPC3}\tag{MP3}\\ 
& \theta_t \geq 0, \hspace{5mm} y_{at}\in \{0,1\}, \hspace{5mm}\text{Start}_{rt} \in \{0,1\} & \forall a \in A, \forall t \in T, \forall r \in R \tag{MP4}
\end{align}

The master problem includes all constraints from the original MIP that do not contain $x_{at}$ variables, replacing them instead with the approximation variables $\theta_t$. An advantage of the disaggregation of the flow problems is that they now only depend on the configuration of arcs $y_{at'}$ for each $t' \in T$. As such, we store solutions to these flow problems, where the key is the vector $(y_{at'} |\forall a \in A)$. If a flow problem with a certain configuration of arcs has already been solved, we recall it from memory rather than calculating it again. The number of times we solve and recall flow problems will be shown in the results section for some cases. There are several other improvements we have made to increase the effectiveness of the solver. 

While exploring the branch-and-bound tree, at each integer solution, the values of $\theta_t$ are checked. If the approximations are incorrect, we add a new Benders cut to the problem, which we add as a lazy constraint. This is the most efficient way of implementing Benders decomposition. Our implementation of Benders decomposition with only what has been described above will be referred to as DBD.

\subsection{Initial cuts}

\begin{figure}
	\includegraphics[width=0.31\textwidth]{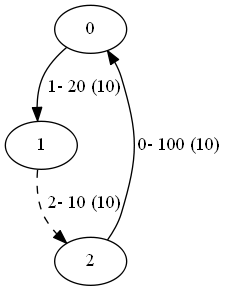}
	\includegraphics[width=0.31\textwidth]{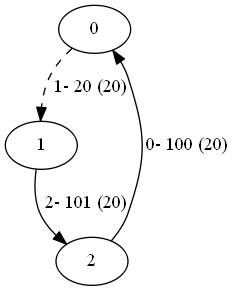}
	\includegraphics[width=0.34\textwidth]{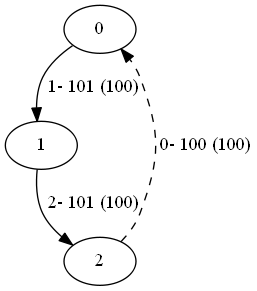}

	\caption{Example of placing cuts on bottlenecks. Numbers next to arcs are of the form: ArcID - capacity (flow). Dashed lines are arcs whose associated dual variables are non-zero.}
	\label{FigICs}
\end{figure}

We begin by considering the case where all arcs are turned on, i.e. $y_{at} = 1 \hspace{2mm}\forall a \in A, \forall t \in T$. This gives us an upper bound on the flow through the network in any case, because turning an arc off cannot possibly increase the total flow. The set of arcs which have a non-zero dual variable associated with their capacity constraints is a ``minimum cut-set'' from the max-flow min-cut theorem \cite{Elias1956}. In other words, they are bottlenecks of the network, since turning any of them off will directly affect the total flow through the network. 

\begin{wrapfigure}{R}{0.42\textwidth}
	\includegraphics[width=0.41\textwidth]{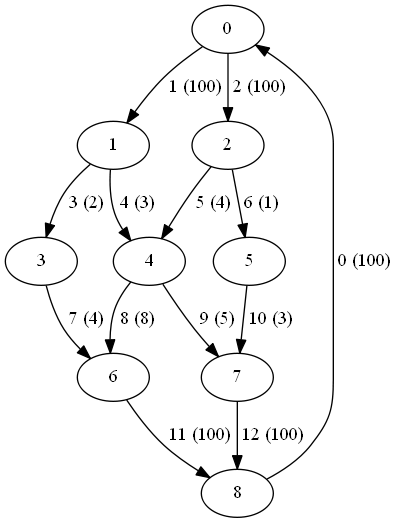}
	\caption{A layered network. The edges are labelled by ArcID (capacity). The two layers have capacities of 10 and 20.}
	\label{FigIC2}
	
\end{wrapfigure}

Consider the trivial case in Figure \ref{FigICs}. To start with, the arc with capacity 10 is the bottleneck. It is the only arc with a non-zero dual variable, so the initial cut will be $\theta_t \leq 10y_{2,t}$ for every $t$. Next, we change the capacity of this arc to be larger than that of the total flow arc (i.e. 100). When we solve the flow problem again, we see that arc 1 will be the bottleneck. Since it is the only arc with a non-zero dual variable, we add another cut $\theta_t \leq 20y_{1,t}$ for every $t$. We increase the capacity of this arc as before, calculate the solution to the new flow problem, and find that the total flow arc is now the bottleneck. When this occurs, we are finished adding initial cuts.

Both of these initial cuts are valid, since turning off either of these arcs will restrict all flow through the network and $\theta_t = 0$. For larger problems, the bottlenecks will consist of multiple arcs, and the cuts provide information about how closing arcs in those bottlenecks affects the flow. 

Consider now the less-trivial example in Figure \ref{FigIC2}. This is a layered network, where the sum of the capacities of arcs 3-6 is 10 and of arcs 7-10 is 20. In this case the initial cut-set will be arcs 3-6 since they form the minimum cut-set. The cuts we add will be:

\begin{equation}
\theta_t \leq 2y_{3,t} + 3y_{4,t} + 4y_{5,t} + y_{6,t} \hspace{10mm} \forall t \in T
\end{equation}

Because this is a minimum cut-set by the max-flow min-cut theorem, cutting any of these arcs will reduce the total flow through the network. When we increase these capacities and solve the maximum flow problem again, we find the second cut-set of arcs 7-10. We then add the cuts:

\begin{equation}
\theta_t \leq 4y_{7,t} + 8y_{8,t} + 5y_{9,t} + 3y_{10,t} \hspace{10mm} \forall t \in T
\end{equation}

which are also valid.

With these initial cuts, we can find a solution to the master problem which satisfies our scheduling constraints. We then solve the problem as before, except now we start with a tighter LP bound. This implementation will be labelled as Pre-cuts.

Another potential improvement is user-suggested heuristics. While the MIP solver is exploring nodes, adding Benders cuts will cut off the current solution because the values of $\theta_t$ are too high. When this happens, we construct a feasible solution with the same values of $y_{at}$ and $Start_{rt}$, but set $\theta_t$ equal to the solutions to the sub-problems that were solved, and suggest this to the solver as a heuristic solution. In our experiments, it has in some cases led to significant jumps in reducing the optimality gap. This implementation is called MAIN.

Finally, we implement what is known as a ``warm start'', where we relax the integrality constraints and run the main algorithm to add Benders cuts as in (\ref{BendCut}). First suggested by McDaniel and Devine in 1977\cite{McDaniel1977}, a warm start can often improve the initial bound of the Benders master problem. We do this repeatedly until the objective value found by the relaxed problem stops decreasing. Once this occurs, we restore the integrality constraints and solve the problem once more. This results in a tighter LP bound and often leads to great improvements in the solve time of the MIP. However, the time it takes to solve the relaxed problem multiple times must be taken into account. The full implementation with all features is called LP-R.

\subsection{Strength of the LP-Relaxation} \label{LPSubSect}
All jobs $r \in R$ can start during the time window [Re$_r$,De$_r$-Dur$_r$+1]. If the size of this window is larger than the duration of the job, then the LP-relaxation of the problem provides a weaker bound. This is because it is possible to fractionally assign values to Start$_{rt}$ and thus have arcs fractionally open for more than the duration of the maintenance. The result of this is a weaker LP bound on the objective value and thus a longer solve time, which applies to all MIP implementations.

Consider the example of a job $r$ where Re$_r = 0$, Dur$_r = 10$ and De$_r = 29$. This job could start at times $t' \in [0,20]$. We are only considering the scheduling constraints (\ref{MainC3}-\ref{MainC4}) here. When the variables $y_{at}$ and Start$_{rt}$ are allowed to be continuous, it is possible for Start$_{rt}$ to take values of $\frac{1}{3}$ at times 0, 10 and 20, and 0 elsewhere. Because in constraint (\ref{MainC4}) we sum the Start$_{rt}$ over values of $t'$ within Dur$_r$ time periods previous to $t$, at every $t \in [$Re$_r,$De$_r]$, 

\begin{align*}
&\sum_{r\in R_a}\sum_{t'=\max\{\textup{Re}_{r}, t-\textup{Dur}_r+1\}}^{\min\{t,\textup{De}_r\}} \text{Start}_{rt'} = \frac{1}{3}
\end{align*}

This implies that

\begin{align*}
y_{at} + \frac{1}{3} &= 1 & \forall a \in A, \forall t \in T \\
y_{at} &= \frac{2}{3} & \forall a \in A, \forall t \in T
\end{align*}

This means that the arc the job is being performed on will be fractionally closed for almost the entire time between Re$_r$ and De$_r$, whereas in the integer program it must be fully closed for Dur$_r$. If closing this arc results in a change in the minimum cut-set of the network, but fractionally closing the arc does not, then the relaxed problem will not properly reflect the impact on the objective value from closing this arc. When exploring the branch-and-bound tree, we give branching priority to the $y_{at}$ over the Start$_{rt}$ variables to allow more balanced branching to occur. That is, the impact of setting a branch variable to 1 is similar to the impact of setting it to 0.

\subsection{Algorithm details}
We have included pseudo-code for our algorithms to give a brief idea of how our implementations are set up. The first is the main procedure, which includes potential calls to the sub-routines PRE-CUTS and LP-RELAX, depending on which implementation is being used. When we talk about building models, we are referring to creating a Model object in Gurobi\cite{gurobi}.

\begin{algorithm}
	\caption{Main Procedure} \label{AlgMP}
	\hspace{5mm} \\
	Initialise information about Network and Jobs: $N, A, Cap_a, R, R_a, T$ \\
	Create empty dictionary Y to hold solutions to Sub-Problems \\
	Build Sub-Model and Master Model \\
	Initialise Sub-Model variables $x_a$ and Master Model variables $y_{at}, Start_{rt}$ \\
	Set $y_{at} = 1$ $\forall a \in A,$ $\forall t \in T$ \\
	OPTIMISE Sub-Model for one time value\\
	Add constraints: $\theta_t \leq x_0$ $\forall t \in T$ \\
	\begin{algorithmic}
		\If {Pre-cuts}
		\State Run procedure PRE-CUTS
		\EndIf
		\If {LP-Relax}
		\State Run procedure LP-RELAX
		\EndIf
	\end{algorithmic}
	Run procedure OPTIMISE MASTER MODEL
\end{algorithm}

Because we have disaggregated the sub-problems in time, and they are all identical, we only need to build one model and use it to solve the flow sub-problems for all time periods. The results of these sub-problems are stored in a dictionary Y. For any time $t'$, ($y_{at'}$ $\forall a \in A$) will be the configuration of arcs of the network, i.e. which arcs are open and which are closed. 

\begin{algorithm}
	\caption{OPTIMISE MASTER MODEL} \label{AlgOMM}
	\begin{algorithmic}
		\State {OPTIMISE Master Model with callback MMCB}
		\State {MMCB:}
		\If {Found new incumbent solution}
		\ForAll {$t' \in T$}
		\State {Retrieve values of $y_{at'}$ and pass to Sub-Model}
		\If {not $y_{at'}$ in Y}
		\State{OPTIMISE Sub-Model}
		\State{Y$[y_{at'}]$ $\gets$ ($x_0$, $x_a$ $\forall a \in A$, $u_a$ $\forall a \in A$)}
		\Else 
		\State {($x_0$, $x_a$ $\forall a \in A$, $u_a$ $\forall a \in A$) $\gets$ Y$[y_{at'}]$}
		\EndIf
		\State {$\bar{\theta_t'} \gets x_0$}
		\If {$\theta_t' > x_0$}
		\State {Add lazy constraint $\theta_t' \leq \sum_{a\in A} \limits Cap_ay_{at'}u_a$}
		\EndIf
		\EndFor
		\If {$\sum_{t \in T} \limits \theta_t > \sum_{t \in T} \limits \bar{\theta_t}$}
		\State {Suggest $\theta_t = \bar{\theta_t}$ $\forall t \in T$ as heuristic solution}
		\EndIf
		\EndIf
	\end{algorithmic}
\end{algorithm}

The configuration $y_{at'}$ is the key to a dictionary entry which holds a tuple. The first value is the total flow through the network and the second is a vector $u_a$ $\forall a \in A$ of the dual variables associated with the capacity constraints of each arc.

\begin{algorithm}
	\caption{PRE-CUTS}\label{AlgPC}
	\begin{algorithmic}
		\State{Retrieve dual variables from Sub-Model $u_a$ $\forall a \in A$} 
		\While {not $u_0 > 0$}
		\State {Add constraints $\theta_t \leq \sum_{a\in A} \limits Cap_ay_{at}u_a$ $\forall t \in T$}
		\ForAll {$ a \in A$}
		\If {$u_a > 0$}
		\State {$Cap_a = Cap_0 + 1$}
		\EndIf
		\EndFor
		\State {OPTIMISE Sub Model}
		\State{Retrieve dual variables from Sub-Model $u_a$ $\forall a \in A$} 			
		\EndWhile
		\State {Reset values of $Cap_a$ $\forall a \in A$}
	\end{algorithmic}
\end{algorithm}

When we need the solution to a sub-problem for a certain configuration of arcs, we first check to see if we have already solved it. If $y_{at'}$ is a valid key to the dictionary we simply recall the tuple stored in that entry. If that particular configuration has not been solved, we pass the values $y_{at'}$ to the sub-problem model and solve the max-flow problem. We then store the results of this in the dictionary under the key $y_{at'}$.

\begin{algorithm}
	\caption{LP-RELAX}\label{AlgLPR}
	\begin{algorithmic}
		\State {Relax integrality constraints for $y_{at}$ and $Start_{rt}$}
		\While {True}
		\State{Run procedure OPTIMISE MASTER MODEL}
		\ForAll {$t' \in T$}
			\State {Retrieve values of $y_{at'}$ $\forall a \in A$ and pass to Sub-Model}
			\State{OPTIMISE Sub-Model}
			\If {$\theta_t' > x_0$}
				\State {Add constraint $\theta_t' \leq \sum_{a\in A} \limits Cap_ay_{at'}u_a$}
			\EndIf
		\EndFor
		\If {Objective has not improved, time limit expired or max iterations reached}
		\State {Exit While}
		\EndIf
		\EndWhile
		\State{Enforce integrality constraints for $y_{at}$ and $Start_{rt}$}
	\end{algorithmic}
\end{algorithm}

The last ``if'' statement of Algorithm \ref{AlgOMM} is the addition we made to the MAIN implementation that is not present in DBD or Pre-cuts. After all new lazy constraints have been added, it is possible that the current values of $\theta_t$ may violate them, so we suggest a heuristic solution to the solver instead. This solution uses the same configuration of arcs $y_{at}$ $\forall a \in A$, $\forall t \in T$, however we set the values of $\theta_t$ to the solutions of the sub-problems, which we know do not violate the new constraints.

\subsection{Proof of Pareto-optimality of Benders cuts}
It has been shown that the use of Pareto-optimal cuts can greatly improve the convergence rate of Benders decomposition \cite{Tang2013,Magnanti1984,Magnanti1981}. Pareto-optimal cuts are especially powerful when there is  degeneracy in the sub-problems of the Benders decomposition, which is the case in network design problems \cite{Magnanti1984}. The definitions of dominating and Pareto-optimal cuts we use come from Magnanti and Wong (1981) \cite{Magnanti1981}, but have been modified to match our problem.

Since these cuts are disaggregated in time, we will omit all $t$ parameters for simplicity. This means we will consider $\theta$ instead of $\theta_t$, and likewise $x_a$, $y_a$, $u_a$. Since our Benders cuts depend only upon our $y$ variables, we write them in a general form $\theta \leq \bar{\theta}^k(y)$, where $\bar{\theta}^k(y) = \sum_{a\in A}\limits \textup{Cap}_a y_{a} u_a^k$.
 
 \begin{ThmDef}
 	A Benders cut $\theta \leq \bar{\theta}^k(y)$ dominates another Benders cut $\theta \leq \bar{\theta}^l(y)$ if $\bar{\theta}^k(y) \leq \bar{\theta}^l(y)$ for all feasible $y$ and is a strict inequality for at least one feasible $y$.
 \end{ThmDef}
 
The contrapositive of this is that if there exists a feasible solution $y^*$ such that $\bar{\theta}^k(y^*) > \bar{\theta}^l(y^*)$, then $\theta \leq \bar{\theta}^k(y)$ does not dominate $\theta \leq \bar{\theta}^l(y)$.
 
\begin{ThmDef}
	A Benders cut $\theta \leq \bar{\theta}^k(y)$ is considered Pareto-optimal if it is not dominated by any other cuts.
\end{ThmDef}
 
 That is to say, if for any other Benders cut $\theta \leq \bar{\theta}^l(y)$ one can find a feasible solution $y^*$ such that $\bar{\theta}^k(y^*) < \bar{\theta}^l(y^*)$, then $\theta \leq \bar{\theta}^k(y)$ is Pareto-optimal.
 
\begin{ThmObs}
 	For a given network configuration $y_{a}$ and its primal and dual flow solutions $x_a$ and $u^k_a$, the set $A^k = \{a\in A|u^k_a > 0\} \subset A$ must constitute a simple cut-set of the original network.
\end{ThmObs}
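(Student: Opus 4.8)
The plan is to prove the observation by exploiting that the disaggregated sub-problem (\ref{SPObj})--(\ref{SPC2}) is a maximum-flow linear program whose dual is precisely the minimum-cut problem, so that the max-flow min-cut theorem (Theorem 1) applies directly. The dual variables $u_a^k$ attached to the capacity constraints (\ref{SPC2}) are exactly the min-cut variables, and the strategy is to show that their support $A^k$ coincides with the forward arcs of a minimum-value cut-set, which is then automatically simple.

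First I would record the consequences of LP optimality that the argument needs. By complementary slackness, $u_a^k>0$ forces (\ref{SPC2}) to be tight, so $x_a=\textup{Cap}_a y_a$ and every arc of $A^k$ is saturated; in particular arc $0$ is never saturated (as $\textup{Cap}_0$ exceeds the total flow), whence $u_0^k=0$ and $0\notin A^k$, consistent with $A^k\subsetneq A$. Next I would introduce the node potentials $\pi_v$ that arise as the duals of the flow-conservation constraints (\ref{SPC1}); dual feasibility gives $u_a^k\geq \pi_i-\pi_j$ for every arc $a=(i,j)$, while the dual constraint from arc $0=(t,s)$ together with $u_0^k=0$ yields $\pi_s-\pi_t\geq 1$. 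To see that $A^k$ is a cut-set, note that any arc $a=(i,j)$ with $u_a^k=0$ satisfies $\pi_j\geq\pi_i$, so the potential is non-decreasing across every arc outside $A^k$; a forward directed path from the source $s$ to the sink $t$ using only arcs of $A\setminus A^k$ would then telescope to $\pi_s\leq\pi_t$, contradicting $\pi_s-\pi_t\geq 1$. Hence no forward $s$--$t$ path survives the deletion of $A^k$, which is exactly the paper's definition of a cut-set.

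To obtain simplicity I would pass to a \emph{basic} optimal dual solution, of the kind the simplex method returns. By integrality of the minimum-cut polytope, such a solution is $0/1$-valued: $\pi\in\{0,1\}$ and $u_a^k\in\{0,1\}$, with $A^k$ equal to the set of forward arcs crossing the cut $S=\{v:\pi_v=1\}$. The dual objective then equals the value of this cut and, by the max-flow min-cut theorem, equals the maximum flow, so $A^k$ is a \emph{minimum-value} cut-set. Minimality is then immediate: since each arc of $A^k$ is saturated and therefore has positive capacity, deleting any one of them and retaining the cut-set property would produce a cut-set of strictly smaller value, contradicting that $A^k$ already attains the minimum; hence $A^k$ is simple.

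I expect the simplicity claim, rather than the cut-set claim, to be the main obstacle. A generic fractional, degenerate optimal dual can spread positive mass over a union of several minimum cuts, whose support is still a cut-set but is no longer inclusion-minimal; the observation is therefore really a statement about the basic vertex solution actually computed, and the proof must make the integrality of that solution explicit. A secondary point to handle carefully is the distinction between the ``original network'' and the sub-problem's network of open arcs---in particular whether a closed arc ($y_a=0$, hence zero capacity) can enter $A^k$---which I would dispose of by noting that such an arc contributes nothing to the dual objective and can be taken with $u_a^k=0$ in the computed basic solution.
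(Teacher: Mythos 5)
Your first half is correct and takes a genuinely different route from the paper. The paper never argues from the LP structure of the sub-problem at all: it asserts that the observation ``comes from the max-flow min-cut theorem'' and then reasons at the level of the Benders cuts themselves --- if $A^k$ were not a cut-set, the cut would be violated by the configuration $y^*$ that closes exactly the arcs of $A^k$ (contradicting the validity guaranteed by LP duality), and if $A^k$ were a cut-set but not simple, the cut generated by $A^k\setminus\{a'\}$ would dominate it, so it could not be Pareto-optimal. Your complementary-slackness and node-potential argument (potentials non-decreasing along every arc with $u^k_a=0$, telescoped along any directed $s$--$t$ path, against $\pi_s-\pi_t\ge 1$) is a self-contained proof of the cut-set property that holds for \emph{every} optimal dual solution, and it correctly blocks paths through closed arcs as well, since their dual constraints are still present in the sub-problem. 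That part is sharper than anything written in the paper.

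The genuine gap is in the simplicity half, and it sits exactly where you suspected. The step ``each arc of $A^k$ is saturated and therefore has positive capacity'' is a non sequitur: a closed arc ($y_a=0$) has sub-problem capacity $\textup{Cap}_a y_a=0$ and is saturated at zero flow, so saturation implies nothing about capacity, and such arcs are precisely the ones that defeat the minimality argument, because deleting them from $A^k$ does not lower the cut's sub-problem value. Your proposed repair --- take $u^k_a=0$ on closed arcs in the computed basic solution --- proves only that \emph{some} basic optimal dual has a simple support, whereas the Observation asserts that the computed solution \emph{must}. The stronger claim is in fact false: take arcs $s\to j$ (open, capacity 1), $j\to t$ (open, capacity 2), a node $i$ with no incoming arcs, $i\to s$ (open), and $i\to j$ (closed). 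The point $\pi_s=\pi_i=1$, $\pi_j=\pi_t=0$, $u_{sj}=u_{ij}=1$, all other duals zero, is feasible, optimal (the closed arc contributes $\textup{Cap}_{ij}\cdot 0\cdot u_{ij}=0$ to the dual objective) and basic (eight independent tight constraints pin all eight variables), yet its support $\{(s,j),(i,j)\}$ is not simple, since $\{(s,j)\}$ alone is already a cut-set of the original network --- no $s$--$t$ path can enter $i$. So the observation is really an existential statement: an optimal dual with simple support always exists (for instance, enlarge the residual-reachable min cut by every sink-side node that cannot reach $t$), but it is not forced on whatever dual the solver returns, and an airtight proof must either construct that dual explicitly or restate the claim. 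In fairness, the paper elides exactly the same issue; your write-up at least names it, but the patch as described does not close it.
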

 
 This comes from the max-flow min-cut theorem. Because our network has uniform demand weighting, we redefine this set as $A^k = \{a\in A|u^k_a = 1\} \subset A$. If the set is not a simple cut-set, then the Benders cut generated by the set will not be Pareto-optimal. If $A^k$ is not a cut-set, then we choose the network configuration $y^*$ where all arcs in $A^k$ are closed and all other arcs are open. Since $A^k$ is not a cut-set, it is still possible for flow between the source and the sink to occur, so $\theta > 0$, however we also have that $\theta \leq \sum_{a \in A^k}\limits \textup{Cap}_a y^*_a = 0$. This means the constraint generated by $A^k$ is invalid.
 
 If the set $A^k$ is a cut-set but not a simple cut-set, then there exists an arc $a' \in A^k$ such that $A^k \setminus \{a'\}$ is still a cut-set. Let $A^l = A^k \setminus \{a'\}$, which means $A^l \cup \{a'\} = A^k$. We now compare the Benders cuts generated by these two sets:
 
 \begin{equation}
 \theta \leq \sum_{a \in A^l} Cap_a y_a < \sum_{a \in A^l} Cap_a y_a + Cap_{a'} y_{a'} = \sum_{a \in A^l \cup \{a'\}} Cap_a y_a = \sum_{a\in A^k} Cap_a y_a
 \end{equation}
  
  So the Benders cut $\theta \leq \bar{\theta}^l(y)$ dominates $\theta \leq \bar{\theta}^k(y)$, and thus the Benders cut generated by $A^k$ cannot be Pareto-optimal. Using this we show that all the Benders cuts we generate are Pareto-optimal.
 
\begin{theorem}
 	Given a simple cut-set $A^k$, the Benders cut $\theta \leq \sum_{a\in A^k} \limits Cap_a y_{a} = \bar{\theta}^k(y)$, is Pareto-optimal.
\end{theorem}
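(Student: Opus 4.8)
The plan is to show directly that the cut indexed by $A^k$ is not dominated by any other Benders cut, by exhibiting, for each competing cut, a feasible configuration at which the $A^k$-cut is strictly tighter. First I would fix notation: under the uniform demand weighting every Benders cut has the form $\theta \le \bar{\theta}^l(y) = \sum_{a \in A^l}\textup{Cap}_a y_a$, where $A^l = \{a \in A \mid u^l_a = 1\}$; and by the Observation together with the validity argument preceding the theorem, the index set $A^l$ of any valid cut must be a cut-set of the network (a non-cut-set yields an invalid cut, as shown above).

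Let $\theta \le \bar{\theta}^l(y)$ be an arbitrary Benders cut with $A^l \ne A^k$. By the restatement of Pareto-optimality given above, it suffices to produce a feasible configuration $y^*$ with $\bar{\theta}^k(y^*) < \bar{\theta}^l(y^*)$; this rules out $l$ dominating $k$, and since $l$ is arbitrary it establishes that $k$ is Pareto-optimal. I would take $y^*$ to be the configuration that closes exactly the arcs of $A^k$, i.e. $y^*_a = 0$ for $a \in A^k$ and $y^*_a = 1$ otherwise, which is the same kind of single-period configuration already used in the discussion following the Observation. With this choice $\bar{\theta}^k(y^*) = \sum_{a \in A^k}\textup{Cap}_a y^*_a = 0$, while $\bar{\theta}^l(y^*) = \sum_{a \in A^l \setminus A^k}\textup{Cap}_a$.

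The crux of the argument is then to show $A^l \setminus A^k \ne \emptyset$, for then $\bar{\theta}^l(y^*) \ge \textup{Cap}_{a'} > 0 = \bar{\theta}^k(y^*)$ for any $a' \in A^l \setminus A^k$, which closes the step. I would argue this by minimality: $A^l$ is a cut-set and $A^l \ne A^k$, so if $A^l \subseteq A^k$ then $A^l$ would be a proper subset of $A^k$ that is still a cut-set. Since a superset of a cut-set is again a cut-set, this would force some single-arc deletion $A^k \setminus \{a'\}$ to remain a cut-set as well, contradicting the assumption that $A^k$ is a \emph{simple} cut-set. Hence $A^l \not\subseteq A^k$, providing the required arc $a'$. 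I expect this subset/minimality step, together with the implicit reliance on the fact that every valid competing cut is indexed by a genuine cut-set, to be the main obstacle, since it is what forces us to combine the earlier validity argument with the defining minimality of $A^k$. A secondary point to verify is that $y^*$ is admissible under the notion of \emph{feasible} $y$ used in the definitions, i.e. that closing exactly one cut-set's worth of arcs in a single time period is a legitimate configuration, which I would justify exactly as in the Observation discussion.
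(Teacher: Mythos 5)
Your proposal is correct and takes essentially the same route as the paper: it closes exactly the arcs of $A^k$ (so $\bar{\theta}^k(y^*)=0$) and exhibits an arc $a' \in A^l \setminus A^k$ to force $\bar{\theta}^l(y^*) \geq \textup{Cap}_{a'} > 0$. In fact your minimality step is slightly more careful than the paper's, which deduces the existence of $a' \in A^l \setminus A^k$ directly from $A^k \neq A^l$, whereas you correctly observe that one must also rule out $A^l \subsetneq A^k$, using that $A^l$ is a cut-set and that $A^k$ is simple.
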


\begin{proof}
	The cut-set $A^k$ does not have to be minimal in the original network. Now, for any other Benders cut $\theta \leq \bar{\theta}^l(y)$, we have another cut-set $A^l = \{a \in A|u^l_a >0\} \subset A$, and $A^k \neq A^l$. If we compare these two Benders cuts, we get
	
	\begin{align}
	\theta \leq \sum_{a \in A^k} Cap_a y_a = \bar{\theta}^k(y) \\
	\theta \leq \sum_{a \in A^l} Cap_a y_a = \bar{\theta}^l(y)
	\end{align}
	
	For $\theta \leq \bar{\theta}^k(y)$ to be Pareto-optimal, we need to find a solution $y^*$ such that $\bar{\theta}^k(y^*) < \bar{\theta}^l(y^*)$. Because $A^k \neq A^l$, we choose an arc $a'$ such that $a' \in A^l$ and $a' \notin A^k$. Now we can take the solution
	
	\begin{equation}
	y^*_a = \begin{cases} 0,&a\in A^k\\ 1,& \text{otherwise}  \end{cases}
	\end{equation}
	
	which is to turn off all arcs in $A^k$ and open all other arcs. Our Benders cuts now look like:
	
	\begin{align}
	\theta \leq \sum_{a \in A^k} Cap_a y^*_a = 0 \\
	\theta \leq \sum_{a \in A^l} Cap_a y^*_a \geq Cap_{a'} > 0
	\end{align}
	
	So $\theta \leq \bar{\theta}^l(y)$ does not dominate $\theta \leq \bar{\theta}^k(y)$. Since $A^l$ is arbitrary, we have that $\theta \leq \bar{\theta}^k(y)$ is Pareto-optimal.♣
\end{proof}

\section{Results and comparison}
We tested our implementation on the same data as Boland et. al \cite{Boland2013}, using several implementations. We started with disaggregated Benders decomposition using lazy constraints (DBD), added the tighter initial cuts (Pre-cuts), suggested new solutions as heuristics (MAIN) and solved the LP relaxation beforehand (LP-R). Our program is implemented in Python 2.7.10 (64-bit) and uses the Gurobi 6.5 (64-bit on 8 threads) solver package \cite{gurobi}, running on a machine with Windows 8.1 Enterprise (64-bit). The machine has an Intel Core i7-3770 (3.40GHz) with 8GB of RAM. 

Data we collected included the total run time of the program, the optimality gap, the number of lazy cuts added, branch-and-bound nodes explored and how many times the sub-problem was solved and recalled from memory. Total run time was recorded as an integer number of seconds. For the LP-R implementation, we also recorded how many times the relaxed problem was solved. 

Because $Cap_a \in \mathbb{N}$ for our data sets, all feasible solutions will have integer objective values. This allows us to set a termination condition for the MIP gap, since an absolute gap of less than one is sufficient to prove optimality. Without this condition, some programs find the optimal solution in less than 10 minutes, and then spend vast amounts of time trying to close the gap by exploring hundreds of thousands of nodes. Since this is unnecessary, we will terminate the program if the absolute gap is less than 0.999.

Comparing our results with those of Boland et. al. is not a simple task. As they used a straight MIP formulation in CPLEX and a number of heuristics, it is difficult to report optimality gaps. For the heuristics, the optimality gap is computed using the best upper bound found by the CPLEX implementation, which is not proving optimality in many cases. This means that their optimality gaps are over-estimates, and their heuristics may be closer to optimality than reported.

\subsection{Simulated Data}
Each of the three constructed data sets from Boland et. al. has eight networks of increasing size, and each network has 10 randomly-generated lists of maintenance requests. For all instances, the number of jobs per arc is between five and 15, and the duration is between 10 and 30 time steps. For the first instance, the number of possible starting times for each job ranges between one and 35, whereas in the second instance set, each job has between 25 and 35 potential start times. The second instance set is thus more difficult to solve in general, because there is a much higher chance of having jobs where the potential starting window is larger than the duration of the job, which causes the problem discussed in section \ref{LPSubSect}. Finally, there is a third instance set where the number of possible starting times for each job is between one and 10. This is an especially easy case as there will almost never be a job with the aforementioned problem.

\begin{table}
	\caption{Average, minimum and maximum optimality gap, number of schedules solved to optimality, number of flow problems solved and recalled and number of lazy constraints generated for instance set 1 after 5 minutes}
	\label{TabI1Gap5}
	\begin{tabular*}{\textwidth}{@{\extracolsep{\fill}}cccccccccc}
		\hline & & 1 & 2 & 3 & 4 & 5 & 6 & 7 & 8 \\ 
		\hline \multirow{3}{*}{DBD} & avg. gap (\%) & 0.000 & 0.000 & 0.000 & 0.027 & 0.416 & 0.058 & 0.202 & 0.088 \\ 
		& min gap (\%) & 0.000 & 0.000 & 0.000 & 0.000 & 0.167 & 0.000 & 0.000 & 0.000 \\ 
		& max gap (\%) & 0.000 & 0.000 & 0.000 & 0.101 & 0.583 & 0.180 & 0.843 & 0.757 \\ 
		& num solved & 10 & 10 & 10 & 3 & 0 & 5 & 1 & 7 \\ 
		& \# flow solved & 3114 & 6587 & 3631 & 18141 & 20272 & 19186 & 18885 & 16248 \\ 
		& \# flow recalled & 27757 & 37300 & 14064 & 38396 & 39500 & 43362 & 29798 & 25722 \\ 
		& \# lazy gen. & 3189 & 4348 & 2657 & 4502 & 4964 & 6516 & 4386 & 4281 \\ 
		\hline \multirow{3}{*}{Pre-cuts} & avg. gap (\%) & 0.000 & 0.000 & 0.000 & 0.027 & 0.308 & 0.025 & 0.234 & 0.120 \\ 
		& min gap (\%) & 0.000 & 0.000 & 0.000 & 0.000 & 0.223 & 0.000 & 0.000 & 0.000 \\ 
		& max gap (\%) & 0.000 & 0.000 & 0.000 & 0.085 & 0.368 & 0.098 & 1.474 & 1.162 \\ 
		& num solved & 10 & 10 & 10 & 3 & 0 & 6 & 2 & 7 \\ 
		& \# flow solved & 2352 & 5024 & 2950 & 16015 & 16215 & 13952 & 21081 & 12211 \\ 
		& \# flow recalled & 22124 & 31866 & 13449 & 40018 & 35549 & 34483 & 32914 & 24449 \\ 
		& \# lazy gen. & 1033 & 1583 & 1053 & 1834 & 1743 & 2163 & 1838 & 1341 \\ 
		\hline \multirow{3}{*}{MAIN} & avg. gap (\%) & 0.000 & 0.000 & 0.000 & 0.036 & 0.344 & 0.025 & 0.191 & 0.034 \\ 
		& min gap (\%) & 0.000 & 0.000 & 0.000 & 0.000 & 0.159 & 0.000 & 0.000 & 0.000 \\ 
		& max gap (\%) & 0.000 & 0.000 & 0.000 & 0.130 & 0.465 & 0.097 & 1.151 & 0.277 \\ 
		& num solved & 10 & 10 & 10 & 3 & 0 & 7 & 1 & 6 \\ 
		& \# flow solved & 2352 & 4973 & 2950 & 17227 & 15660 & 13918 & 20346 & 13754 \\ 
		& \# flow recalled & 22324 & 31015 & 13449 & 42412 & 31096 & 34519 & 28337 & 25111 \\ 
		& \# lazy gen. & 1033 & 1541 & 1053 & 1820 & 1768 & 1955 & 1764 & 1413 \\ 
		\hline \multirow{3}{*}{LP-R} & avg. gap (\%) & 0.000 & 0.000 & 0.000 & 0.041 & 0.387 & 0.037 & 0.143 & 0.031 \\ 
		& min gap (\%) & 0.000 & 0.000 & 0.000 & 0.000 & 0.238 & 0.000 & 0.000 & 0.000 \\ 
		& max gap (\%) & 0.000 & 0.000 & 0.000 & 0.101 & 0.543 & 0.165 & 0.609 & 0.226 \\ 
		& num solved & 10 & 10 & 10 & 3 & 0 & 6 & 2 & 7 \\ 
		& \# flow solved & 2213 & 4243 & 2424 & 14297 & 13419 & 13887 & 15000 & 13200 \\ 
		& \# flow recalled & 21562 & 27245 & 11572 & 35927 & 26629 & 35149 & 23766 & 21957 \\ 
		& \# lazy gen. & 781 & 1071 & 700 & 1507 & 1358 & 1751 & 1228 & 1043 \\ 
		& avg. LP solves & 3.5 & 3.9 & 4.0 & 3.9 & 4.5 & 4.3 & 3.7 & 3.4 \\ 
		\hline 
	\end{tabular*} 
\end{table}

Table \ref{TabI1Gap5} shows the average, minimum and maximum optimality gaps achieved while solving the first instance set using our four implementations. It also includes the number of maximum flow sub-problems solved, number of times solutions were recalled from memory, and number of lazy constraints added to the problem. The first three networks were all solved to optimality in less than five minutes, and even a number of the instances for the larger networks found optimal solutions in the time frame. The average optimality gap for MAIN and LP-R always was less than 0.4\%, and the maximum gap for LP-R never exceeded 0.61\%.

\begin{table}
	\caption{Average, minimum and maximum optimality gap, number of schedules solved to optimality, number of flow problems solved and recalled and number of lazy constraints generated for instance set 2 after 5 minutes}
	\label{TabI2Gap5}
	\begin{tabular*}{\textwidth}{@{\extracolsep{\fill}}cccccccccc}
		\hline & & 1 & 2 & 3 & 4 & 5 & 6 & 7 & 8 \\ 
		\hline \multirow{3}{*}{DBD} & avg. gap (\%) & 0.000 & 0.529 & 0.000 & 0.291 & 2.771 & 4.227 & 0.621 & 0.424 \\ 
		& min gap (\%) & 0.000 & 0.053 & 0.000 & 0.000 & 0.158 & 2.014 & 0.006 & 0.000 \\ 
		& max gap (\%) & 0.000 & 1.586 & 0.000 & 0.913 & 5.609 & 9.121 & 2.800 & 1.135 \\ 
		& num solved & 10 & 0 & 10 & 2 & 0 & 0 & 0 & 3 \\ 
		& \# flow solved & 5932 & 13573 & 4353 & 17512 & 20770 & 28459 & 26945 & 24295 \\ 
		& \# flow recalled & 43280 & 54534 & 13340 & 42770 & 30478 & 48951 & 37984 & 26501 \\ 
		& \# lazy gen. & 3944 & 5984 & 2800 & 4572 & 5524 & 7879 & 4794 & 4965 \\ 
		\hline \multirow{3}{*}{Pre-cuts} & avg. gap (\%) & 0.000 & 0.442 & 0.000 & 0.622 & 3.300 & 2.819 & 0.994 & 0.302 \\ 
		& min gap (\%) & 0.000 & 0.116 & 0.000 & 0.000 & 0.316 & 0.482 & 0.002 & 0.000 \\ 
		& max gap (\%) & 0.000 & 1.166 & 0.000 & 2.630 & 7.417 & 6.165 & 3.093 & 1.516 \\ 
		& num solved & 10 & 0 & 10 & 2 & 0 & 0 & 0 & 6 \\ 
		& \# flow solved & 3853 & 11808 & 2860 & 13058 & 14770 & 16064 & 21034 & 16490 \\ 
		& \# flow recalled & 28257 & 53700 & 13334 & 36906 & 26575 & 33510 & 31870 & 27193 \\ 
		& \# lazy gen. & 1357 & 2693 & 966 & 1626 & 1587 & 2075 & 1842 & 1757 \\ 
		\hline \multirow{3}{*}{MAIN} & avg. gap (\%) & 0.000 & 0.486 & 0.000 & 0.836 & 2.646 & 2.819 & 0.638 & 0.308 \\ 
		& min gap (\%) & 0.000 & 0.092 & 0.000 & 0.000 & 0.089 & 0.695 & 0.072 & 0.000 \\ 
		& max gap (\%) & 0.000 & 1.368 & 0.000 & 2.615 & 5.123 & 6.165 & 1.259 & 1.516 \\ 
		& num solved & 10 & 0 & 10 & 2 & 0 & 0 & 0 & 6 \\ 
		& \# flow solved & 3687 & 11087 & 2860 & 12434 & 11740 & 14687 & 19735 & 17752 \\ 
		& \# flow recalled & 27921 & 51920 & 13334 & 30628 & 22189 & 29880 & 25855 & 23726 \\ 
		& \# lazy gen. & 1298 & 2462 & 966 & 1728 & 1469 & 2080 & 1807 & 1851 \\ 
		\hline \multirow{3}{*}{LP-R} & avg. gap (\%) & 0.000 & 0.508 & 0.000 & 0.723 & 2.826 & 4.130 & 0.561 & 0.337 \\ 
		& min gap (\%) & 0.000 & 0.078 & 0.000 & 0.000 & 0.201 & 0.602 & 0.122 & 0.000 \\ 
		& max gap (\%) & 0.000 & 1.598 & 0.000 & 3.327 & 6.867 & 9.209 & 1.455 & 2.369 \\ 
		& num solved & 10 & 0 & 10 & 2 & 0 & 0 & 0 & 6 \\ 
		& \# flow solved & 3683 & 10773 & 2494 & 14180 & 10696 & 14411 & 18712 & 13693 \\ 
		& \# flow recalled & 28336 & 52240 & 11400 & 29075 & 19633 & 26552 & 24373 & 21373 \\ 
		& \# lazy gen. & 1075 & 2202 & 746 & 1637 & 1387 & 1852 & 1672 & 1475 \\ 
		& avg. LP solves & 2.0 & 2.0 & 2.0 & 2.0 & 2.0 & 2.0 & 2.0 & 2.0 \\ 
		\hline 
	\end{tabular*}
\end{table}

The second instance set was more difficult to solve, as expected. Table \ref{TabI2Gap5} shows that in many cases the problems did not solve to optimality within five minutes, though the gap was closed significantly more than in Boland et. al. \cite{Boland2013}. Many instances closed to within 5\% of optimality in the five-minute time window, and the average gap for MAIN was less than 3\% in all cases.

The number of times the relaxed LP was solved is interesting. Our algorithm runs the relaxed LP and adds cuts until the objective function stops decreasing. For all cases in instance set 2, the LP was solved twice, which means there was no improvement after the first set of cuts. This implies that there was no direct benefit to adding cuts based on the solution to the relaxed problem. This can also be seen from the fact that in many cases the average optimality gap of LP-R was similar to or greater than that of MAIN.

We then compared only the MAIN and LP-R programs when given 30 minutes. Table \ref{TabI1All30} shows that for instance set 1, 70\% of all schedules were solved to optimality. Even though none of the schedules for network five were solved to optimality, every schedule we tested was solved to within 0.3\% of optimality by both methods. LP-R solved one more schedule in total than MAIN, however MAIN was more consistent in closing the optimality gap. The extra run time resulted in a tighter gap and four extra schedules being solved to optimality in instance set 2, as seen in Table \ref{TabI2All30}. The average gap was now less than 1.6\% for LP-R and 1.3\% for MAIN. The maximum gap had dropped from 9.2\% to 3.3\% for LP-R, and all solutions for MAIN achieved a gap of less than 2.6\%.

\begin{table}
	\caption{Average and maximum run times, average, minimum and maximum optimality gap and number of schedules solved to optimality for instance set 1 after 30 minutes}
	\label{TabI1All30}
	\begin{tabular*}{\textwidth}{@{\extracolsep{\fill}}cccccccccc}
		\hline & & 1 & 2 & 3 & 4 & 5 & 6 & 7 & 8 \\ 
		\hline \multirow{3}{*}{MAIN} & avg. time (s) & 13.3 & 41.3 & 12.3 & 1169.3 & 1800.7 & 357.9 & 1371.3 & 542.6 \\ 
		& max time (s) & 31.0 & 58.0 & 23.0 & 1800.0 & 1803.0 & 1482.0 & 1801.0 & 1801.0 \\ 
		& avg. gap (\%) & 0.000 & 0.000 & 0.000 & 0.011 & 0.172 & 0.000 & 0.047 & 0.004 \\ 
		& max gap (\%) & 0.000 & 0.000 & 0.000 & 0.048 & 0.292 & 0.000 & 0.223 & 0.024 \\ 
		& num solved & 10 & 10 & 10 & 4 & 0 & 10 & 3 & 8 \\ 
		& avg. nodes & 1333.0 & 2528.0 & 912.7 & 79959.9 & 24851.9 & 21291.0 & 25772.4 & 11528.5 \\ 
		\hline \multirow{3}{*}{LP-R} & avg. time (s) & 14.1 & 38.8 & 13.2 & 1070.6 & 1801.9 & 457.3 & 1346.8 & 524.9 \\ 
		& max time (s) & 30.0 & 62.0 & 18.0 & 1800.0 & 1819.0 & 1800.0 & 1827.0 & 1800.0 \\ 
		& avg. gap (\%) & 0.000 & 0.000 & 0.000 & 0.011 & 0.182 & 0.000 & 0.049 & 0.007 \\ 
		& max gap (\%) & 0.000 & 0.000 & 0.000 & 0.059 & 0.293 & 0.003 & 0.266 & 0.055 \\ 
		& num solved & 10 & 10 & 10 & 6 & 0 & 9 & 3 & 8 \\ 
		& avg. nodes & 586.6 & 1957.0 & 74.0 & 58234.4 & 23513.9 & 31531.1 & 20548.7 & 11093.7 \\ 
		\hline \multirow{3}{*}{Direct MIP} & avg. time (s) & 854.2 & 1804.1 & 44.7 & 1808.1 & 1813.8 & 1808.5 & 1816.1 & 1822.4 \\ 
		& max time (s) & 1803.0 & 1805.0 & 221.0 & 1809.0 & 1837.0 & 1810.0 & 1817.0 & 1824.0 \\ 
		& avg. gap (\%) & 0.020 & 0.369 & 0.000 & 0.661 & 1.370 & 2.222 & 0.306 & 0.187 \\ 
		& max gap (\%) & 0.106 & 0.604 & 0.000 & 1.172 & 1.885 & 3.868 & 0.628 & 0.589 \\ 
		& num solved & 6 & 0 & 10 & 0 & 0 & 0 & 0 & 0 \\ 
		& avg. nodes & 85535.7 & 41985.7 & 1202.6 & 20318.8 & 7407.8 & 20615.8 & 7226.5 & 5877.6 \\ 
		& num. LR Solved& 10.0 & 10.0 & 10.0 & 10.0 & 10.0 & 10.0 & 10.0 & 10.0 \\ 
		\hline 
	\end{tabular*} 
\end{table}

\begin{table}
	\caption{Average and maximum run times, average, minimum and maximum optimality gap and number of schedules solved to optimality for instance set 2 after 30 minutes}
	\label{TabI2All30}
	\begin{tabular*}{\textwidth}{@{\extracolsep{\fill}}cccccccccc}
		\hline & & 1 & 2 & 3 & 4 & 5 & 6 & 7 & 8 \\ 
		\hline \multirow{3}{*}{MAIN} & avg. time (s) & 40.8 & 1656.6 & 12.7 & 1470.7 & 1802.0 & 1800.7 & 1663.0 & 651.4 \\ 
		& max time (s) & 103.0 & 1800.0 & 14.0 & 1845.0 & 1811.0 & 1804.0 & 1863.0 & 1800.0 \\ 
		& avg. gap (\%) & 0.000 & 0.163 & 0.000 & 0.188 & 1.298 & 1.067 & 0.369 & 0.074 \\ 
		& max gap (\%) & 0.000 & 0.462 & 0.000 & 0.502 & 2.525 & 1.842 & 1.022 & 0.514 \\ 
		& num solved & 10 & 2 & 10 & 2 & 0 & 0 & 0 & 8 \\ 
		& avg. nodes & 2731.0 & 58544.3 & 0.0 & 17895.8 & 7161.6 & 8203.7 & 15450.2 & 3731.5 \\ 
		\hline \multirow{3}{*}{LP-R} & avg. time (s) & 46.1 & 1734.6 & 14.0 & 1459.6 & 1800.6 & 1800.9 & 1803.9 & 610.5 \\ 
		& max time (s) & 109.0 & 1800.0 & 16.0 & 1803.0 & 1804.0 & 1805.0 & 1825.0 & 1871.0 \\ 
		& avg. gap (\%) & 0.000 & 0.170 & 0.000 & 0.209 & 1.565 & 0.971 & 0.275 & 0.078 \\ 
		& max gap (\%) & 0.000 & 0.512 & 0.000 & 0.639 & 3.321 & 1.711 & 1.279 & 0.537 \\ 
		& num solved & 10 & 2 & 10 & 2 & 0 & 0 & 0 & 8 \\ 
		& avg. nodes & 3357.1 & 59355.0 & 0.0 & 19754.0 & 7798.0 & 8691.1 & 19436.4 & 2780.5 \\ 
		\hline \multirow{3}{*}{Direct MIP} & avg. time (s) & 1629.5 & 1804.3 & 30.4 & 1709.9 & 1811.9 & 1809.2 & 1817.3 & DNS \\ 
		& max time (s) & 1803.0 & 1806.0 & 43.0 & 1847.0 & 1812.0 & 1810.0 & 1818.0 & DNS \\ 
		& avg. gap (\%) & 0.100 & 2.244 & 0.000 & 0.761 & 3.505 & 5.968 & 153.276 & 100.000 \\ 
		& max gap (\%) & 0.381 & 4.179 & 0.000 & 1.828 & 6.746 & 8.526 & 1365.990 & 100.000 \\ 
		& num solved & 1 & 0 & 10 & 1 & 0 & 0 & 0 & 0 \\ 
		& avg. nodes & 97099.2 & 17865.8 & 84.4 & 8530.0 & 1308.6 & 5127.8 & 376.4 & 0.0 \\ 
		& num. LR Solved& 10.0 & 10.0 & 10.0 & 10.0 & 10.0 & 10.0 & 9.0 & 0.0 \\ 
		\hline 
	\end{tabular*} 
\end{table}

While not always the best, our MAIN implementation performed consistently well compared to other methods, especially for the large networks. We also compared our method to the direct MIP formulation in Gurobi. This is for comparison with state-of-the-art general purpose solvers. Gurobi is effective for solving linear programs and small integer programs, though larger integer problems may benefit from a tailored approach, such as the one we describe. What is clear is that our method has better scalability than the direct MIP implementation. For the first four networks, Gurobi performs comparably, however as the networks get larger, the direct MIP fails to solve even the linear relaxation in 30 minutes.

Table \ref{TabI1All30} shows that the direct implementation had difficulty in solving many of the cases. With the exception of network 3, the average gap for the MIP was always at least six times larger than for LP-R. Table \ref{TabI2All30} shows how the MIP struggled with the larger networks, and again with the exception of network 3, fewer cases were solved to optimality than by MAIN and LP-R. In instance set 2, the direct implementation failed to solve any of the linear relaxations of the job schedules of network 8.

Network 3 is an interesting case because of the number of nodes explored. In general, Benders decomposition yields a more relaxed problem. This means that the branch-and-bound tree should be larger, which is acceptable because the nodes can be explored much more quickly. However, in the case of network 3, especially for instance set 2, the number of nodes explored by our implementations is fewer than those explored by the direct MIP. This is because the objective value of the solution to the relaxed problem is equal to or very close to the IP solution's objective value. The result of this is that Gurobi must extract an integer solution with a similar objective value, and it is easier to do this for smaller models, which is why fewer nodes are explored by our implementations. For instance set 2, the solver-added cuts are sufficient to find a solution at the root node, so that branch-and-bound is not required.

Moreover, in most instances we found that Gurobi is able to add cuts at the root node much more effectively in the MAIN or LP-R models, compared to the Direct MIP. For example, in one instance the direct implementation started the Branch and Bound process after 900s with an optimality gap of 3.51\%, while the MAIN implementation started after only 300s with a gap of 2.07\%. This means that not only do we generate speed improvements by being able to search the Branch and Bound tree more quickly, the number of nodes in the Branch and bound tree also decreases due to the reduced model size and the improved bounds found by Gurobi's cutting algorithms.

This can be seen in Tables \ref{TabI1All30} and \ref{TabI2All30} from the average number of Branch and Bound nodes explored. In cases where all problems solved to optimality, our implementations required fewer nodes to be explored due to the tighter bounds and solver-added cuts. In the larger cases where no problems were solved to optimality, our implementations explored more nodes due to the increased efficiency resulting from disaggregated Benders decomposition.

\begin{table}
	\caption{Average and maximum run times and average number of branch-and-bound nodes for instance set 3}
	\label{TabI3Time}
	\begin{tabular*}{\textwidth}{@{\extracolsep{\fill}}cccccccccc}
		\hline & & 1 & 2 & 3 & 4 & 5 & 6 & 7 & 8 \\ 
		\hline \multirow{3}{*}{DBD} & avg. time (s) & 3.8 & 5.4 & 6.1 & 12.7 & 18.5 & 12.7 & 36.2 & 37.9 \\
		& max time (s) & 4 & 6 & 7 & 20 & 20 & 14 & 58 & 44 \\
		& avg. nodes & 251.1 & 61.5 & 0.6 & 416.0 & 191.8 & 41.3 & 1760.9 & 97.1 \\
		\hline \multirow{3}{*}{Pre-cuts} & avg. time (s) & 4.0 & 6.5 & 7.2 & 13.9 & 19.4 & 16.5 & 30.6 & 41.2 \\ 
		& max time (s) & 4 & 7 & 8 & 15 & 22 & 19 & 42 & 44 \\ 
		& avg. nodes & 0.4 & 0.4 & 1.0 & 15.2 & 125.0 & 0.0 & 652.9 & 6.7 \\
		\hline \multirow{3}{*}{MAIN} & avg. time (s) & 4.0 & 6.4 & 7.2 & 14.1 & 19.4 & 16.6 & 30.7 & 41.5 \\
		& max time (s) & 4 & 7 & 8 & 15 & 22 & 19 & 42 & 45 \\
		& avg. nodes & 0.4 & 0.4 & 1.0 & 15.1 & 125.3 & 0.0 & 591.7 & 6.7 \\
		\hline \multirow{4}{*}{LP-R} & avg. time (s) & 6.0 & 8.8 & 10.1 & 18.37 & 25.4 & 20.3 & 38.2 & 54.0 \\
		& max time (s) & 7 & 9 & 11 & 19 & 29 & 23 & 42 & 60 \\
		& avg. nodes & 0.2 & 0.0 & 0.0 & 3.7 & 4.3 & 0.0 & 110.7 & 9.3 \\
		& avg. LP solves & 3.9 & 4.2 & 4.1 & 4.8 & 5.0 & 4.1 & 5.1 & 5.4 \\
		\hline 
	\end{tabular*}
\end{table}

Table \ref{TabI3Time} shows the average and maximum run times for the problems in instance set 3. All instances in this set were solved to optimality in less than one minute. Because these instances are relatively easy to solve, there is not much difference between our implementations. Solving the linear relaxation first does not benefit these cases because they are easy enough to solve without it. The biggest difference is in the number of nodes explored. The pre-processing of bottlenecks greatly improves the initial bounds and results in fewer nodes being explored.

Another common difference is the number of lazy constraints added during the solution process. Typically DBD has the most and LP-R has the fewest. This is because when we add the initial cuts, as in Pre-cuts, we no longer have to add them later as lazy constraints. The same applies to LP-R: more cuts are added as hard constraints before we begin the branch-and-bound process, so fewer lazy constraints are added.

\subsection{Real-world Instances}
We also tested our techniques on the instance sets derived from real-world data, provided by Boland et. al. \cite{Boland2013}. The network is representative of the Hunter Valley Coal Chain, and there are two lists of jobs designed to span one year each, for 2010 and 2011. The time is discretised into hours, so there are 8761 time periods for each problem (since 2010 and 2011 are not leap years). The unrestricted flow over one year is 161.3 Mt, and the success of an algorithm is measured in the minimisation of the impact on the network. The majority of jobs have durations between 1 and 18 hours, while the potential time window is set at two weeks for each job.

The fact that the potential job window is significantly larger than the duration of the job in all cases leads to the problem described in Section \ref{LPSubSect}. The solver tends to start with a very weak LP bound and takes more time to converge to a solution. Table \ref{TabRWGapC} shows that after two hours, the optimality gap is 7.4\% $\pm$ 2\% in all cases. It is possible to tell the solver to focus on finding better solutions rather than lowering the bound or proving optimality. In this case, the optimality gap is similar to before, however the objective values are higher and comparable to the results in Boland et. al. It is also possible to tell the solver not to add its own cuts and to instead use only the constraints we have added. The main benefit of this is that we enter the branch-and-bound process much earlier and thus can explore more nodes. The drawback is that the upper bound is not as tight as it would have been had the solver  added its own cuts.

\begin{table}
	\caption{Comparison between MAIN and LP-R, with and without solver-added cuts, with regular or objective-value-focussed solve priorities on 2010 data after two hours. All numbers are in Mt.}
	\label{TabRWGapC}
	\begin{tabular*}{\textwidth}{@{\extracolsep{\fill}}cccccc}
		\hline & & \multicolumn{2}{c}{Standard} & \multicolumn{2}{c}{Focussed} \\
		& & Objective & Bound & Objective & Bound \\
		\hline \multirow{2}{*}{Cuts} & MAIN & 132.3 & 141.1 & 135.7 & 143.1 \\
		& LP-R & 132.5 & 141.1 & 131.5 & 142.2 \\
		\multirow{2}{*}{No Cuts} & MAIN & 133.0 & 145.8 & 135.4 & 144.2 \\
		& LP-R & 135.0 & 143.8 & 135.3 & 143.3 \\		
	\end{tabular*}
\end{table}

\begin{table}
	\caption{Comparison between MAIN and LP-R, with and without solver-added cuts, with regular or objective-value-focussed solve priorities on 2011 data after two hours. All numbers are in Mt.}
	\label{TabRWGapNC}
	\begin{tabular*}{\textwidth}{@{\extracolsep{\fill}}cccccc}
		\hline & & \multicolumn{2}{c}{Standard} & \multicolumn{2}{c}{Focussed} \\
		& & Objective & Bound & Objective & Bound \\
		\hline \multirow{2}{*}{Cuts} & MAIN & 138.0 & 145.5 & 139.0 & 146.3 \\
		& LP-R & 136.3 & 145.6 & 138.9 & 146.1 \\
		\multirow{2}{*}{No Cuts} & MAIN & 136.9 & 149.2 & 139.0 & 148.0 \\
		& LP-R & 139.0 & 147.4 & 139.9 & 147.4 \\
	\end{tabular*}
\end{table}

When looking across years and solver focus, in three out of four cases LP-R found the highest objective value, which is within 1.6 Mt of the best found solution in Boland et. al. for both instance sets. In most cases focussing the solver towards finding better solutions does yield better objective values, however the upper bound suffers as a result. Removing solver-added cuts does not yield much improvement when using MAIN, whereas LP-R finds objective values between 1 and 4 Mt higher in every case, regardless of the solver focus. This is likely because the initial cuts added during the pre-cuts stage and the additional constraints added from the relaxed problem are sufficient, and less time is spent at the root node adding cuts, so the branch-and-bound process is started sooner.

While these methods cannot prove optimality in two hours, neither can a heuristic or a direct implementation in CPLEX or Gurobi. The heuristics used in Boland et. al. are very good and our methods return similar objective values to theirs. If the solver focus were switched to providing a better upper bound, it may make a good bounding tool for checking the heuristic solutions against. Indeed, one can imagine a hybrid solution technique where two algorithms are run simultaneously, on separate threads or separate computers.

\section{Conclusion}
We have shown that disaggregated Benders decomposition is an effective technique for solving the MaxTFFAO problem. In many simulated cases optimal solutions can be found in a short enough time to be practically useful. The amount of choice in the real world problems makes it difficult to prove optimality, however reasonable solutions can be returned in a short amount of time. In conjunction with the advanced capabilities of solvers such as Gurobi, disaggregated Benders decomposition can result in smaller problems with tighter bounds and smaller branch-and-bound trees. In future we would like to consider more broadly the class of integrated network design and scheduling problems to which this technique also applies. It would also be interesting to look at other problems to which disaggregated Benders decomposition can be applied, and see if similar benefits can be obtained.

\section*{References}

\bibliography{DAM-Paper}

\end{document}